\newcommand{\R}{\mathbb{R}}
\newcommand{\N}{\mathbb{N} }
\newcommand{\Chi}{\raise 1.5pt\hbox{$\chi$}}
\newcommand{\beq}{\begin{equation}}
\newcommand{\eeq}[1]{\label{#1}\end{equation}}
\newcommand{\beqar}{\begin{eqnarray*}}
\newcommand{\eeqar}{\end{eqnarray*}}
\newcommand{\proof}[1]{\par\smallskip\noindent{\bf Proof: }}
\newcommand{\qed}{\penalty 500\hfill$\square$\par\medskip}
\newcommand\interruptenum{\xdef\savecounter{\theenumi}\end{enumerate}}
\newcommand\continueenum{\begin{enumerate}%
\setcounter{enumi}{\savecounter}\item[]}
\newcommand{\proofof}{}
\renewcommand{\text}[1]{\hbox{#1}}
\renewcommand{\eqref}[1]{(\ref{#1})}
\newcommand{\avg}{\raise 0pt\hbox{$-$}\hskip -10.7pt\int}
\newcounter{stepcounter}
\newtheorem{lem}         {Lemma}[section]
\newtheorem{pro}    [lem]{Proposition}
\newtheorem{thm}    [lem]{Theorem}
\newtheorem{rem}    [lem]{Remark}
\newtheorem{cor}    [lem]{Corollary}
\newtheorem{df}     [lem]{Definition}
\date{}
\title{On Moment Condition and Center Condition for Abel Equation}
\begin{document}


\maketitle


\begin{center}
{{\large Anderson L. A. de Araujo, Ab\'ilio Lemos and Alexandre Miranda Alves} \\\vspace{.5cm} Universidade Federal de Vi\c{c}osa, CCE, Departamento de Matem\'atica\\ Avenida PH Rolfs, s/n\\ CEP 36570-900, Vi\c{c}osa, MG, Brasil \\ E-mails: \tt anderson.araujo@ufv.br, abiliolemos@ufv.br, amalves@ufv.br}
\end{center}

%

\

\noindent{\sc Abstract}. In this paper we consider  Abel equation $x' = g(t)x^2+f(t)x^3$, where $f$ and $g$ are analytical functions.  We proved that if  the equation has a center at $x=0$, then the Moment Conditions, i. e., $m_k=\int_{-1}^1f(t)(G(t))^kdt=0,~~k=0,1,2$, is satisfied where $G(t)=\int_{-1}^tg(s)ds$. Besides, we give partial a positive answer to a conjecture proposed by Y. Lijun and T. Yun in 2001.

\

\noindent {\sc AMS Subject Classification 2010}. 30E05, 34A34, 34C07.
\

\noindent {\sc Keywords}. Abel equation; moment condition; center condition; recurrence relation.

\

\newpage

\section{Introduction}

\subsection{Historical Aspect}

Let the planar system

\begin{equation}\label{S1}
\displaystyle{\begin{array}{ccc}
\dot{x} & = &  -y+P(x,y)\\
 \dot{y} & = &  x+Q(x,y),\\
\end{array}}
 \end{equation}
where $P(x,y)$ and $Q(x,y)$ are polynomials, without constant term, of maximum degree n. The singular point $(0,0)$ is a center, if surrounded by closed trajectories; or a focus, if surrounded by spirals. The classical center-focus problem consists in distinguishing when a  singular point is either a center or a focus. The problem started with Poincar\'e \cite{P} and Dulac \cite{Du}, and, in the present days, many questions remain unanswered. The basic results were obtained by A. M.
Lyapunov \cite{Li}. He proved that if $P(x,y)$ and $Q(x,y)$ satisfy an infinite sequence of recursive conditions, then \eqref{S1} has a center to the origin. He also presented conditions for the origin of the system \eqref{S1} to be a focus.

If we write $P(x,y)=\sum_{i=1}^{l}P_{m_i}(x,y)$ and $Q(x,y)=\sum_{i=1}^{l}Q_{m_i}(x,y)$, where $P_{m_i}(x,y)$ and $Q_{m_i}(x,y)$ are homogeneous polynomials of degree $m_i\geq1$, then, from Hilbert's theorem on the finiteness of basis of polynomial ideals (\cite{Ka}, Theorem 87, p. 58), it follows that, in the mentioned infinite sequence of recursive conditions, only a finite number of conditions for center are essential. The others result from them.  

Let us consider a particular case of \eqref{S1}. Namely,

\begin{equation}\label{eq.1}
\displaystyle{\begin{array}{ccc}
\dot{x} & = &  -y+P_n(x,y)\\
 \dot{y} & = &  x+Q_n(x,y),\\
\end{array}}
 \end{equation}
where $P_n(x,y)$ and $Q_n(x,y)$ are homogeneous polynomials of degree n.  

When $n = 2$, systems \eqref{eq.1} are quadratic polynomial differential systems (or simply
quadratic systems in what follows). Quadratic systems have been intensively studied over
the last 30 years, and more than a thousand papers on this issue have been published (see,
for example, the bibliographical survey of Reyn \cite{Re}).

A method for investigating if \eqref{eq.1} has a center at the origin consists in transforming the planar system into an Abel equation. 
In polar coordinates $(r,\theta)$ defined by $x = r\cos\theta, y = r\sin\theta$, the system (\ref{eq.1}) becomes
\begin{equation}\label{eq.2}
\displaystyle{\begin{array}{ccl}
\dot{r} & = &  A(\theta)r^n\\
 \dot{\theta} & = &1+B(\theta)r^{n-1},\\
\end{array}}
 \end{equation}
where 
\begin{equation}\label{eq.3}
\displaystyle{\begin{array}{ccl}
A(\theta) & = &  \cos\theta P_n(\cos\theta,\sin\theta) + \sin\theta Q_n(\cos\theta,\sin\theta),\\
B(\theta) & = & \cos\theta Q_n(\cos\theta,\sin\theta) - \sin\theta P_n(\cos\theta,\sin\theta).\\
\end{array}}
 \end{equation}
We remark that $A$ and $B$ are homogeneous polynomials of degree $n + 1$ in the variables
$\cos\theta$ and $\sin\theta$. In the region
$$R=\{(r,\theta):1+B(\theta)r^{n-l}>0\},$$
the differential system (\ref{eq.2}) is equivalent to the differential equation

\begin{equation}\label{eq.4}
\displaystyle{\frac{dr}{d\theta} =\frac{A(\theta)r^n}{1+B(\theta)r^{n-1}}}.
\end{equation}
It is known that the periodic orbits surrounding the origin of the system (\ref{eq.2}) do not intersect the curve $\theta = 0$ (see the Appendix of \cite{CL}). Therefore, these periodic orbits are contained in the region $R$. Consequently, they are also periodic orbits of equation (\ref{eq.4}).

The transformation $(r, \theta) \rightarrow (\gamma, \theta)$ with

\begin{equation}\label{eq.5}
\displaystyle{\gamma =\frac{r^{n-1}}{1+B(\theta)r^{n-1}}}
\end{equation}
is a diffeomorphism from the region R into its image. As far as we know, Cherkas was the first to use this transformation (see \cite{C}). If we write equation (\ref{eq.4}) in the variable $\gamma$, we obtain
\begin{equation}\label{eq.6}
\displaystyle{\frac{d\gamma}{d\theta} =-(n-1)A(\theta)B(\theta)\gamma^3 + [(n-1)A(\theta)-B'(\theta)]\gamma^{2}},
\end{equation}
which is a particular case of an Abel differential equation. We notice that $f(\theta)=-(n-1)A(\theta)B(\theta)$ and $g(\theta)=(n-1)A(\theta)-B'(\theta)$ are homogeneous trigonometric polynomials of
degree $2(n + 1)$ and $n + 1$, respectively.

Now the Center-Focus problem of 
equation \eqref{S1} has a translation in equation \eqref{eq.6}. That is, given $\gamma_0$ small enough, we
look for necessary and sufficient conditions on $f(\theta)$ and $g(\theta)$ in order to assure
that the solution of equation \eqref{eq.6} with the initial condition $\gamma(0) = \gamma_0$ has the
property that $\gamma(0) = \gamma(2\pi)$. We observe that this condition implies the periodicity of this solution.

In the present paper, we consider a certain variant of the Center-Focus problem related to the original one - the Center-Focus problem for the Abel differential equation
\begin{equation}\label{Abel1}
	\frac{dx}{dt} = g(t)x^2+f(t)x^3 
\end{equation}
This problem is to provide necessary and sufficient conditions on $f$ and $g$ and on $a,b \in \R$ for all the solutions $x(t)$ of \eqref{Abel1} to satisfy $x(a)=x(b)$. When $f$ and $g$ are polynomial functions, the equation \eqref{Abel1} is called polynomial Abel equation. 
Notice that, now, such condition does not imply the periodicity of $x(t)$. In recent years, the Center-Focus problem for the Polynomial Abel equation has advanced substantially, as observed in \cite{BFY2}, \cite{BFY3}, \cite{BRY},  \cite{Br1}, \cite{Br2}, \cite{Ch},  \cite{Yo}.

\subsection{Composition conjecture}

\begin{df}\label{1.1}
An Abel differential equation \eqref{Abel1} is said to have a center at $a\leq t\leq b$ if $x(a)=x(b)$ for any solution $x(t)$ (with the initial value $x(a)$ small enough), or equivalently, the equation \eqref{Abel1} has a center at $x=0$ if all the solutions
close to $x=0$ are closed.
\end{df}

The Center-Focus problem is to give necessary and sufficient conditions on
$f$, $g$ for the Abel equation above to have a center. The only, known to us, sufficient condition for the Center is the following
''Polynomial Composition Condition'' (PCC):

\begin{df} The polynomials $f=F'$, $g=G'$ are said to satisfy the Polynomial
Composition Condition (PCC) on $[a,b]$, if there exist polynomials
$\widetilde{F}$, $\widetilde{G}$ and $W$, such that
\[F(t)=\widetilde{F}(W(t)), G(t)=\widetilde{G}(W(t)), W(a)=W(b).\]
\end{df}

(PCC) is also known to be necessary for the Center for small degrees of $f$, $g$ and in some other very special situations. Notice that (PCC) is described by a finite number of algebraic equations on the coefficients of $f$, $g$. 

\begin{pro}\label{prop5} If $f$, $g$ satisfy the Polynomial Composition Condition (PCC)
on $A=[a,b]$, then the Abel equation (\ref{Abel1}) has a center on $A$.
\end{pro}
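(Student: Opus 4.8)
The plan is to exploit the composition structure directly through a change of the \emph{dependent} variable, which converts the original non-autonomous equation into an auxiliary Abel equation in the inner variable $W$. First I would differentiate the composition hypotheses: since $g=G'$, $f=F'$ with $G=\widetilde{G}\circ W$ and $F=\widetilde{F}\circ W$, the chain rule yields the factorizations
$$g(t)=\widetilde{G}'(W(t))\,W'(t), \qquad f(t)=\widetilde{F}'(W(t))\,W'(t).$$
The key observation is that both coefficients share the common factor $W'(t)$; isolating it is precisely what makes the reduction work.

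Next I would introduce the auxiliary Abel equation in a new independent variable $s$,
$$\frac{dy}{ds}=\widetilde{G}'(s)\,y^2+\widetilde{F}'(s)\,y^3,$$
and solve it with small initial datum $y(s_0)=c$ at the point $s_0:=W(a)=W(b)$. The central claim is that $x(t):=y(W(t))$ solves \eqref{Abel1}: differentiating gives $x'(t)=y'(W(t))\,W'(t)$, and substituting the auxiliary equation together with the two factorizations above produces exactly $x'(t)=g(t)\,x(t)^2+f(t)\,x(t)^3$. Because the right-hand side of \eqref{Abel1} is polynomial in $x$ and continuous in $t$, Picard--Lindel\"of guarantees local existence and uniqueness, so $x(t)=y(W(t))$ is \emph{the} solution with $x(a)=y(s_0)=c$.

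The conclusion is then immediate from the endpoint condition $W(a)=W(b)$: evaluating at $t=b$ gives $x(b)=y(W(b))=y(s_0)=y(W(a))=x(a)$, so every such solution closes up and, by Definition \ref{1.1}, the equation has a center on $A$.

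The step requiring the most care---and the main obstacle---is guaranteeing that the auxiliary solution $y(s)$ exists on the \emph{entire} image $W([a,b])$, so that $x(t)=y(W(t))$ is defined for all $t\in[a,b]$. Since $W$ is a polynomial it need not be monotone, so $s=W(t)$ cannot be treated as a genuine change of the independent variable; the composition argument $x=y\circ W$ is used instead precisely to sidestep this. To secure existence I would argue by compactness and continuous dependence: $W([a,b])=[m,M]$ is a compact interval containing $s_0$, the trivial solution $y\equiv 0$ exists on all of $[m,M]$, and by continuous dependence on initial data the solution with $y(s_0)=c$ remains in a small neighborhood of $0$ and persists across $[m,M]$ provided $|c|$ is chosen small enough. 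This smallness is exactly the ``solutions close to $x=0$'' hypothesis of Definition \ref{1.1}, so it is available for free.
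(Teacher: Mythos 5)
Your proof is correct and follows essentially the same route as the paper: pass to the auxiliary Abel equation $\frac{dy}{ds}=\widetilde{G}'(s)y^2+\widetilde{F}'(s)y^3$ and recover solutions of \eqref{Abel1} as $x=y\circ W$, so that $W(a)=W(b)$ forces $x(a)=x(b)$. The only difference is that you make explicit what the paper leaves implicit---the chain-rule verification, the uniqueness argument showing every small solution arises this way, and the existence of $y$ on all of $W([a,b])$ despite $W$ not being monotone---which strengthens rather than changes the argument.
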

\begin{proof}  

Indeed, after a change of variables $w= W(t)$, we obtain a new polynomial Abel equation
\begin{equation}\label{Abel2}
\frac{d\widetilde{y}}{dw} = \widetilde{f}(w)\widetilde{y}^3+\widetilde{g}(w)\widetilde{y}^2
\end{equation}
with $\widetilde{f}=\widetilde{F}'$, $\widetilde{g}=\widetilde{G}'$. All the solutions of (\ref{Abel1}) are obtained from the solutions of
(\ref{Abel2}) by the same substitution $y(t)=\widetilde{y}(W(t))$. Fulfilling $W(a)=W(b)$, one has $y(a)=y(b)$.
\end{proof}

All existing results in the literature so far supports the following conjecture: 

\vspace{.5cm}
\textbf{COMPOSITION CONJECTURE}. \textit{The polynomial Abel equation (\ref{Abel1}) has a center on the set of points $A=[a,b]$ if and only if the Composition Condition (PCC) holds for $f$ and $g$ on $A$.}

\vspace{.5cm}
This conjecture has been verified for small degrees of $f$ and $g$ and in many
special cases in \cite{BBY}, \cite{Bl1}, \cite{Bl2}, \cite{BFY1}, \cite{BFY2}, \cite{BFY3}, \cite{BFY4}, \cite{BFY5},  \cite{Ch}, \cite{YT}, \cite{Yo}.

The equation \eqref{Abel1} was studied in \cite{AGG}, where necessary and sufficient conditions were obtained for this equation to have a center at the origin, where $f(t)$ and $g(t)$ are particular continuous functions. More results that ensure the existence of a center at the origin for some subclasses of Abel equations were obtained in \cite{LLLZ,LLZ}.

When $f$ and $g$ are odd polynomials, then $f(t)=t\hat{f}(t^2)$ and $g(t)=t\hat{g}(t^2)$. Results on  sufficient conditions presented below were obtained by Alwash and Lloyd \cite{AL}.
\begin{pro}[\cite{AL}]\label{lloyd}
Assume $f, g \in C([a,b])$ to be expressed by 
\begin{equation}\label{Composition}
	f(t)=\hat{f}(\sigma(t))\sigma'(t), \,\, g(t)=\hat{g}(\sigma(t))\sigma'(t)
\end{equation}
for some continuous functions $\hat{f}, \hat{g}$ and a continuously differentiable function $\sigma$, which is closed, i.e., $\sigma(a)=\sigma(b)$. Then, the Abel equation
\[
x'(t)=f(t)x^3(t)+g(t)x^2(t), \,\, t \in [a,b]
\]
has a center in $[a,b]$.
\end{pro}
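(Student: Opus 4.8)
The plan is to reduce Proposition (Alwash–Lloyd) to the already-established Proposition 5 (the Polynomial Composition Condition), by recognizing that the representation \eqref{Composition} is nothing but the differential form of a composition factorization through $\sigma$. The key observation is that \eqref{Composition} says precisely that $f$ and $g$ are pullbacks under $\sigma$ of the "downstairs" densities $\hat f$ and $\hat g$; integrating should produce the potentials $F$ and $G$ as compositions with $\sigma$, which is exactly the hypothesis that drives the change-of-variables argument in Proposition 5.

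First I would introduce antiderivatives. Set $\widehat F(u)=\int_{\sigma(a)}^{u}\hat f(s)\,ds$ and $\widehat G(u)=\int_{\sigma(a)}^{u}\hat g(s)\,ds$, and let $F(t)=\int_a^t f(r)\,dr$, $G(t)=\int_a^t g(r)\,dr$. Using \eqref{Composition} and the chain rule, I would verify
\[
\frac{d}{dt}\widehat F(\sigma(t))=\hat f(\sigma(t))\,\sigma'(t)=f(t),
\]
and similarly for $G$; since both sides share the value $0$ at $t=a$, I conclude $F(t)=\widehat F(\sigma(t))$ and $G(t)=\widehat G(\sigma(t))$ for all $t\in[a,b]$. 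This is the continuous (non-polynomial) analogue of the factorization $F=\widetilde F\circ W$, $G=\widetilde G\circ W$ appearing in the (PCC), now with $W$ replaced by $\sigma$.

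Next I would run the same change of variables as in Proposition 5. Substituting $w=\sigma(t)$ in \eqref{Abel1} converts it into the Abel equation
\[
\frac{d\widetilde y}{dw}=\hat f(w)\,\widetilde y^{\,3}+\hat g(w)\,\widetilde y^{\,2},
\]
and every solution of \eqref{Abel1} is recovered as $y(t)=\widetilde y(\sigma(t))$. The closedness condition $\sigma(a)=\sigma(b)$ then forces $y(a)=\widetilde y(\sigma(a))=\widetilde y(\sigma(b))=y(b)$, which is exactly the center condition of Definition 1.1. Thus the equation has a center on $[a,b]$.

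I expect the main obstacle to be technical rather than conceptual: the proof of Proposition 5 is stated for polynomials, where $W$ is automatically smooth and the substitution is unproblematic, whereas here $\sigma$ is merely $C^1$ and need not be monotone or invertible. The careful point is therefore to justify the substitution $w=\sigma(t)$ when $\sigma$ may fail to be a diffeomorphism — one cannot simply invert $\sigma$ globally. The clean way around this is to avoid inverting $\sigma$ altogether: rather than literally changing variables, I would directly check that if $\widetilde y$ solves the downstairs equation with $\widetilde y(\sigma(a))=y(a)$, then $y(t):=\widetilde y(\sigma(t))$ solves \eqref{Abel1}, by differentiating through the chain rule and invoking \eqref{Composition}. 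Uniqueness for the initial value problem (guaranteed by the smoothness of the right-hand side for small data) then identifies this $y$ with the given solution, and the endpoint identity $\sigma(a)=\sigma(b)$ delivers the center. This pullback-and-uniqueness argument sidesteps any need for $\sigma$ to be invertible and is where the real care is required.
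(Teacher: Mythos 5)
Your proposal is correct; the paper itself offers no proof of this proposition (it is quoted from Alwash--Lloyd \cite{AL}), but its proof of the analogous Proposition \ref{prop5} is exactly the change-of-variables/pullback argument you give, so you are following essentially the same route. Your added care in the $C^1$ setting --- not inverting $\sigma$, but instead checking via the chain rule and \eqref{Composition} that $\widetilde y(\sigma(t))$ solves the original equation and then identifying it with the given solution by uniqueness of the initial value problem (valid since $f,g$ are continuous and the right-hand side is locally Lipschitz in $x$) --- is precisely the detail needed to make that sketch rigorous when $\sigma$ is not monotone.
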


If true, the Composition Conjecture tells us a lot about the nature of the return map for Abel equations and its relationship with the coefficients of the system.
However, it also highlights a significant difference between the polynomial and trigonometric cases. In the latter case, it is known that the class corresponding to the Composition Conjecture (that is, those systems with $P$ and $Q$ polynomials of a trigonometric polynomial), although a significant class, does not exhaust all
possible center conditions, see \cite{AL2}.


\subsection{Moment conditions and the Parametric composition conjecture}
In this section, we consider the polynomial Abel differential equation
\begin{equation}\label{par1}
	x'(t)=g(t)x^2(t)+\epsilon f(t)x^3(t), \,\, t \in [a,b],
\end{equation}
where $x$ is real, $\epsilon \in \R$ and $g(t)$ and $f(t)$ are real polynomials. Let us assume that $\int_a^bg(s)ds=0$.

One of the issues that can be tackled is characterizing when \eqref{par1} has a center in
$[a,b]$ for all $\epsilon$ with $|\epsilon|$ small enough. This type of centers are called \textit{infinitesimal centers}
or \textit{persistent centers}, see \cite{AL2}, \cite{CGM}. In \cite{AL2}, it was proved that a necessary and sufficient conditions for \eqref{par1} to have a center in $[a,b]$  are 
\begin{equation}\label{par2}
	\int_a^bf(t)(G(t))^kdt=0,
\end{equation}
for all natural numbers $k \in \N\cup \{0\}$. Conditions \eqref{par2} are called the \textit{moment conditions}. The composition conjecture for moments is that the \textit{moments conditions} imply the \textit{composition condition}. Moreover, in \cite{BRY} it is proved that ``at infinity'' the center conditions are reduced to the moment conditions.

A counterexample to the \textit{composition conjecture for moments} in the polynomial case
was given in \cite{Pa}, see too \cite{GGS}. 

In the trigonometric case, that is, if one considers a trigonometric Abel differential equation
of the form
\begin{equation}\label{par3}
	x'(\theta)=g(\theta)x^2(\theta)+\epsilon f(\theta)x^3(\theta), \,\, \theta \in [0,2\pi],
\end{equation}
where $x$ is real, and $\epsilon$ is a real value close to $0$. One can define the composition conjecture for moments analogously
to the polynomial case. The moment conditions in this case are written as 
\begin{equation}\label{par4}
	\int_0^{2\pi}f(\theta)(G(\theta))^kd\theta=0,
\end{equation}
for all natural numbers $k \in \N\cup \{0\}$. It is also possible to construct a counterexample of the composition conjecture
for moments, see \cite{GGS}

In Lijun and Yun \cite[Theorem 5.5]{LY}, the authors proved the following result.
\begin{pro}\label{A2}
Let $f(\cdot)$ be a polynomial. Then, for $\epsilon$ small enough the Abel equation
\[
x'=2tx^2 + \epsilon f(t)x^3, \,\,\, t \in [-1,1]
\] 
has a center $x=0$ if and only if $f(t)$ is an odd polynomial.
\end{pro}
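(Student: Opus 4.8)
The plan is to reduce the persistence of the center to the vanishing of all the moments, and then to translate that vanishing into the parity of $f$. Since here $g(t)=2t$, one has $\int_{-1}^1 g(s)\,ds=0$, so the standing hypothesis of the moment characterization of \cite{AL2} is met, and $G(t)=\int_{-1}^t 2s\,ds=t^2-1$. By the result recalled around \eqref{par2}, the equation $x'=2tx^2+\epsilon f(t)x^3$ has a center on $[-1,1]$ for all $\epsilon$ with $|\epsilon|$ small enough if and only if the moment conditions $m_k=\int_{-1}^1 f(t)(t^2-1)^k\,dt=0$ hold for every $k\in\N\cup\{0\}$. Thus the whole statement is equivalent to the purely analytic claim that these moments all vanish precisely when $f$ is odd, and this is what I would establish.

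For sufficiency I would first note that $(t^2-1)^k$ is an even function of $t$. Writing $f=f_{\mathrm e}+f_{\mathrm o}$ for its even and odd parts, the product $f_{\mathrm o}(t)(t^2-1)^k$ is odd and hence integrates to zero over the symmetric interval $[-1,1]$, so that $m_k=2\int_0^1 f_{\mathrm e}(t)(t^2-1)^k\,dt$. If $f$ is odd then $f_{\mathrm e}\equiv 0$, every $m_k$ vanishes, and a center persists for all small $\epsilon$. Alternatively, sufficiency follows directly from Proposition \ref{lloyd} applied with the closed map $\sigma(t)=t^2$ (so $\sigma'(t)=2t$ and $\sigma(-1)=\sigma(1)=1$): an odd polynomial can be written $f(t)=t\,\psi(t^2)=\hat f(\sigma(t))\sigma'(t)$ with $\hat f=\psi/2$, while $g(t)=2t=\hat g(\sigma(t))\sigma'(t)$ with $\hat g\equiv 1$, and replacing $f$ by $\epsilon f$ preserves this form; this even yields a center for every $\epsilon$.

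For necessity, suppose all $m_k=0$; I must deduce $f_{\mathrm e}\equiv 0$. Since $f_{\mathrm e}$ is an even polynomial I would write $f_{\mathrm e}(t)=\phi(t^2)$ and substitute $u=t^2$ in $\int_0^1 f_{\mathrm e}(t)(t^2-1)^k\,dt$, obtaining, up to a positive constant, $\int_0^1 \phi(u)(u-1)^k\,u^{-1/2}\,du=0$ for all $k$. Because the polynomials $(u-1)^k$, $k\ge 0$, form a basis of $\R[u]$, this is equivalent to $\int_0^1 \phi(u)\,p(u)\,u^{-1/2}\,du=0$ for every polynomial $p$; choosing $p=\phi$ gives $\int_0^1 \phi(u)^2\,u^{-1/2}\,du=0$. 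Since the weight $u^{-1/2}$ is positive and integrable on $(0,1)$ and $\phi$ is continuous, this forces $\phi\equiv 0$, hence $f_{\mathrm e}\equiv 0$ and $f$ is odd.

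The hard part will be the necessity direction. The reduction to moments via \cite{AL2} is quoted and the parity bookkeeping is routine, but the crux is the positive-definiteness step: one must verify that testing against $(u-1)^k$ is equivalent to testing against an arbitrary polynomial (the triangular basis change in powers of $u$) and that the weighted inner product $\int_0^1(\cdot)(\cdot)\,u^{-1/2}\,du$ is genuinely definite, so that $\int_0^1\phi^2\,u^{-1/2}\,du=0$ really forces $\phi\equiv 0$. Here the integrability of $u^{-1/2}$ at the endpoint $u=0$ is precisely what legitimizes the argument; everything else is direct computation.
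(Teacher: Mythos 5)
Your argument is correct, but there is nothing in the paper to compare it against: Proposition \ref{A2} is not proved in this paper at all, it is quoted from Lijun and Yun \cite[Theorem 5.5]{LY}. Judged on its own terms, your proof is sound and self-contained modulo the persistent-center characterization of \cite{AL2}, which the paper itself states in Section 1.3: with $g(t)=2t$ one indeed has $\int_{-1}^1 g(s)\,ds=0$ and $G(t)=t^2-1$, so a center for all small $\epsilon$ is equivalent to $m_k=\int_{-1}^1 f(t)(t^2-1)^k\,dt=0$ for every $k\ge 0$. Your sufficiency step is fine both ways (parity kills every moment; the composition route through Proposition \ref{lloyd} with $\sigma(t)=t^2$ is even better, since it gives a center for \emph{every} $\epsilon$), and your necessity step --- even part $f_{\mathrm e}(t)=\phi(t^2)$, the triangular basis $(u-1)^k$ of $\R[u]$, and definiteness of the weighted pairing with the integrable weight $u^{-1/2}$ --- is valid. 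A small simplification: the substitution and the weight are unnecessary; staying in the $t$ variable, the same basis argument turns $\int_0^1\phi(t^2)(t^2-1)^k\,dt=0$ for all $k$ into $\int_0^1\phi(t^2)p(t^2)\,dt=0$ for all polynomials $p$, and $p=\phi$ gives $\int_0^1\phi(t^2)^2\,dt=0$ directly.

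One caveat you should record explicitly: what you prove is the \emph{persistent-center} statement, i.e.\ that the equation has a center for \emph{all} sufficiently small $\epsilon$ if and only if $f$ is odd. This is the reading the paper's own framework supports (and the only literal reading that survives $\epsilon=0$, where the equation $x'=2tx^2$ always has a center regardless of $f$). If one instead reads \cite{LY} as the pointwise claim that for each fixed small $\epsilon\neq 0$ a center is equivalent to oddness of $f$, then the necessity direction needs more than \cite{AL2}: one must show that a nonvanishing moment forces some coefficient of the return map to be nonzero for all small $\epsilon\neq 0$, e.g.\ by observing that its leading term in $\epsilon$ is precisely that moment. Your sufficiency direction already covers every $\epsilon$, so under the stronger reading the only missing piece would be this perturbation argument in the necessity direction.
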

Also, the authors conjecture that the conclusion of Theorem \ref{A2} holds without $\epsilon$. 

\subsection{Main results}

This paper aims to study the following Abel's equation
\begin{equation}\label{A1}
	x'(t)=f(t)x^3+g(t)x^2,~t \in [-1,1],
\end{equation}
 where $f$ and $g$ are analytic functions. Now, we state our main results.

\begin{thm}\label{T1}
Consider the Abel's equation
\begin{equation*}
	x'=f(t)x^3+g(t)x^2,~t \in [-1,1],
\end{equation*}
 where $f$ and $g$ are analytic functions. If  this equation has a center at $x=0$, then 
\[
m_k=\int\limits_{-1}^1f(t)(G(t))^kdt=0,~~k=0,1,2,
\]
where $G(t)=\int\limits_{-1}^tg(s)ds$.
\end{thm}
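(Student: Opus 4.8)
The plan is to use analytic dependence of the solution on its initial value and to convert the center condition into vanishing conditions on the Taylor coefficients. Write $x(t;c)$ for the solution of \eqref{A1} with $x(-1)=c$; since $f,g$ are analytic, for $|c|$ small $x(t;c)$ is analytic in $c$, uniformly for $t\in[-1,1]$, so I may expand
$$x(t;c)=\sum_{n\ge 1}c^n\,x_n(t),\qquad x_1(-1)=1,\quad x_n(-1)=0\ (n\ge 2).$$
By Definition \ref{1.1} the center hypothesis means $x(1;c)=x(-1;c)=c$ for all small $c$; matching powers of $c$, this is precisely the infinite family $x_n(1)=0$ for every $n\ge 2$. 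First I would substitute the series into \eqref{A1} and read off the $x_n$ recursively, then translate the first four of these conditions into $G(1)=0$ and the three moments.

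Carrying out the recursion, the absence of a linear term gives $x_1'=0$, so $x_1\equiv 1$; at order $c^2$ only $gx^2$ contributes, so $x_2'=g$ and $x_2=G$; at order $c^3$ one gets $x_3'=2gG+f=(G^2)'+f$, hence $x_3=G^2+F$ with $F(t)=\int_{-1}^t f\,ds$; and at order $c^4$,
$$x_4'=3gG^2+2gF+3fG,\qquad x_4=G^3+2GF+\int_{-1}^t fG\,ds,$$
where the middle term comes from integrating $2gF$ by parts. Evaluating at $t=1$ and using each conclusion to simplify the next, $x_2(1)=0$ gives $G(1)=0$; then $x_3(1)=G(1)^2+F(1)=m_0$ forces $m_0=0$; and $x_4(1)=G(1)^3+2G(1)F(1)+m_1=m_1$ forces $m_1=0$.

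The decisive step, and the one I expect to be the main obstacle, is order $c^5$, both for the combinatorial bookkeeping and for recombining the integrals. Collecting the $c^5$ coefficients of $x^2$ and $x^3$ gives $x_5'=g[2x_4+2Gx_3]+f[3x_3+3G^2]$, and substituting the closed forms for $x_3,x_4$ yields
$$x_5'=4gG^3+6gGF+2gH+6fG^2+3fF,\qquad H(t):=\int_{-1}^t fG\,ds.$$
Integrating over $[-1,1]$, the terms $4gG^3=(G^4)'$ and $3fF=\tfrac{3}{2}(F^2)'$ vanish because $G(1)=F(1)=0$. The crux is to collapse the three remaining cross terms into one multiple of $m_2$: integrating $2gH$ by parts (using $2g=(2G)'$, $H'=fG$, and $G(1)=H(1)=m_1=0$) gives $-2m_2$, integrating $6gGF=3(G^2)'F$ by parts (using $G(1)=0$, $F'=f$) gives $-3m_2$, and the explicit term contributes $6m_2$, so that $x_5(1)=(-2-3+6)m_2=m_2$. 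The center condition $x_5(1)=0$ then yields $m_2=0$, finishing the proof; the only delicate points throughout are tracking the multinomial coefficients in the $c^n$-expansions of $x^2,x^3$ and systematically using the already-established vanishing of $G(1),F(1),m_1$ to discard boundary terms.
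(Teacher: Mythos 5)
Your proposal is correct; I checked the recursion ($x_2=G$, $x_3=G^2+F$, $x_4=G^3+2GF+\int_{-1}^t fG\,ds$), the order-$c^5$ coefficient extraction, and the three integrations by parts, and the bookkeeping $x_5(1)=(-2-3+6)m_2=m_2$ all comes out right. It is, however, organized quite differently from the paper's proof. The paper does not compute the return-map coefficients directly: it invokes Lijun--Yun's Lemma 5.2 (Lemma \ref{LY}), which recasts the center condition as $\int_{-1}^1 g\,dt=0$ together with $\int_{-1}^1 f(t)r_k(t)\,dt=0$ for all $k$, where $r_k$ are exactly your $x_k$; it then introduces the auxiliary function $H(t,\rho)=\int_{-1}^t(f(s)x(s,\rho)+g(s))\,ds$ and evaluates $\partial_\rho^j H(1,\rho)|_{\rho=0}$ for $j=1,2,3$ in two ways (Lemmas \ref{A6} and \ref{A7}): once in terms of the moments, giving $m_0$, $2m_1$ and $3!\left(m_2+\int_{-1}^1 fF\,dt\right)$, and once as $j!\int_{-1}^1 f r_j\,dt$, which vanishes by the cited lemma; the last step $\int_{-1}^1 fF\,dt=\tfrac12 F^2(1)=0$ is the same trick you use to kill the $3fF$ term. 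The trade-off is clear: the paper's route stops at the third $\rho$-derivative of $H$ but leans on the nontrivial equivalence from \cite{LY} (which encodes the fixed-point/integral-equation structure $x(1,\rho)=\rho/(1-\rho H(1,\rho))$), whereas your route uses the center hypothesis only in its trivial direction ($x_n(1)=0$ for $n\ge 2$, immediate from analytic dependence on the initial value) and is therefore self-contained, at the price of pushing the expansion one order further, to $c^5$, and of heavier combinatorial bookkeeping there. Both proofs hinge on the same mechanism in the end: integration by parts plus systematic use of the previously established vanishings $G(1)=F(1)=m_1=0$ to discard boundary terms.
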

\begin{rem}
The following equation, 
\[
\frac{dx}{d\theta}=(\sin\theta - \sin2\theta + \sin3\theta)x^3+(\cos\theta + 2\cos2\theta)x^2,~\theta \in [0,2\pi],
\]
studied by Gin\'e, Grau and Santallusia \cite{GGS2}, has a center at $x=0$. Some computations show that $m_0=0$, $m_1=0$, $m_2=0$ and 
\[
m_3=\int\limits_0^{2\pi}(\sin\theta - \sin2\theta + \sin3\theta)\left(\int\limits_0^{\theta}(\cos\,s + 2\cos2s)ds\right)^3d\theta=\frac{\pi}{2}\neq 0, 
\]
which demonstrates that the result of Theorem \ref{T1} is the best possible. 
\end{rem}

Now, considering $f $ and $g$ real polynomial functions in  Abel's equation \eqref{A1}, we obtain the following result. 

\begin{thm}\label{T1.2}
Consider the Abel's equation
\begin{equation}\label{abell}
	x'(t)=f(t)x^3+g(t)x^2,~t \in [-1,1],
\end{equation}
 where $f(t)=\sum_{j=0}^da_jt^j=p(t^2)+tq(t^2)$ is such that $p(t^2)$ changes sign at most two times in $[-1,1]$ and $g(t)=t^{n-1}$, where $n$ is a positive even integer. If  the equation \eqref{abell} has a center at $x=0$, then $$m_k=\int_{-1}^1f(t)(G(t))^kdt=0,~~k=0,1,2,3,4\ldots,$$
where $G(t)=\int_{-1}^tg(s)ds=\frac{1}{n}(t^n-1)$.
\end{thm}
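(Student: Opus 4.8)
The plan is to combine the three vanishing moments supplied by Theorem~\ref{T1} with the prescribed structure of $f$ and $g$ to force \emph{all} moments to vanish; in fact I expect to prove the stronger statement that $p\equiv 0$, i.e.\ that $f$ is odd, which is exactly the form predicted by the Lijun--Yun conjecture (Proposition~\ref{A2}).

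First I would exploit parity. Since $n$ is even, $G(t)=\tfrac1n(t^n-1)$ is an even function, so $G(t)^k$ is even for every $k$. Writing $f(t)=p(t^2)+tq(t^2)$, the contribution of the odd part is $\int_{-1}^1 tq(t^2)G(t)^k\,dt=0$ because the integrand is odd. Hence
\[
m_k=\int_{-1}^1 p(t^2)\,G(t)^k\,dt ,
\]
so only the even part matters. By hypothesis the equation has a center, so Theorem~\ref{T1} gives $m_0=m_1=m_2=0$. The goal is thereby reduced to showing that these relations, together with the sign-change restriction on $p(t^2)$, already imply $p\equiv0$.

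Next I would set up a Chebyshev-type argument. By evenness of $p(t^2)$ its sign changes in $[-1,1]$ occur in symmetric pairs, so ``at most two sign changes'' means that $p(t^2)$ either keeps a constant sign or changes sign exactly at a single pair $t=\pm t_*$ (equivalently, $p(s)$ changes sign at most once on $s\in(0,1)$). If $p(t^2)$ has constant sign and $p\not\equiv0$, then $m_0=\int_{-1}^1 p(t^2)\,dt\neq0$, contradicting $m_0=0$. In the remaining case I would introduce the common value $c:=G(t_*)=\tfrac1n(t_*^{\,n}-1)$ and consider
\[
\Phi(t):=\bigl(G(t)-c\bigr)\,p(t^2)=\tfrac1n\bigl(t^n-t_*^{\,n}\bigr)p(t^2).
\]
Because $n$ is even, $G(t)-c=\tfrac1n(|t|^n-t_*^{\,n})$ is negative for $|t|<t_*$ and positive for $|t|>t_*$, so it changes sign precisely where $p(t^2)$ does; consequently $\Phi$ has constant sign on $[-1,1]$. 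On the other hand
\[
\int_{-1}^1\Phi(t)\,dt=\int_{-1}^1 G(t)p(t^2)\,dt-c\int_{-1}^1 p(t^2)\,dt=m_1-c\,m_0=0 .
\]
A continuous function of constant sign with vanishing integral is identically zero, so $\Phi\equiv0$; since $G(t)-c$ vanishes only at $\pm t_*$, this gives $p(t^2)\equiv0$, and hence $p\equiv0$. Thus $f$ is odd and $m_k=\int_{-1}^1 p(t^2)G(t)^k\,dt=0$ for every $k$.

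The one genuinely delicate point is the reduction of the hypothesis to a single linear factor: I must use that evenness forces the two sign changes to sit over the \emph{same} value $c$ of $G$, so that the degree-one polynomial $G-c$ (rather than a quadratic) suffices, and then verify that $\Phi$ really does not change sign. Equivalently, one may push $p(t^2)\,dt$ forward by $w=G(t)$ to a signed density on $[-1/n,0]$ whose sign equals that of $p$ composed with an increasing map, hence changes sign at most once; vanishing of the zeroth and first moments of a function with a single sign change then forces it to vanish. Either way only $m_0$ and $m_1$ are actually needed, with $m_2$ from Theorem~\ref{T1} available to spare.
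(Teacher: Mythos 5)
Your proof is correct, and it takes a genuinely different route from the paper's. The paper also begins by invoking Theorem \ref{T1} to get $m_0=m_1=m_2=0$, but then it passes to the variable $u=G(t)$: it forms the function $\psi(u)=\sum_i \mathrm{sgn}(g(t_i(u)))\,f(t_i(u))/g(t_i(u))$ over the two preimages $t_{1,2}(u)=\pm(1+nu)^{1/n}$, notes that $\psi(u)$ is a positive multiple of $f((1+nu)^{1/n})+f(-(1+nu)^{1/n})=2p((1+nu)^{2/n})$ and hence changes sign at most twice on $[-1/n,0]$, and then cites Briskin--Fran\c{c}oise--Yomdin \cite[Theorem 4.1]{BFY2} as a black box: at most two sign changes of $\psi$ together with $m_0=m_1=m_2=0$ forces $m_k=0$ for all $k$. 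You instead stay in the $t$-variable, discard the odd part of $f$ by parity, and run the Chebyshev-type argument yourself with the single factor $G-c$; in effect you re-prove, in an elementary and self-contained way, exactly the instance of \cite[Theorem 4.1]{BFY2} that the paper needs, and the key observation that evenness places the two sign changes over the \emph{same} value $c=G(t_*)$ is what lets a degree-one factor (and hence only $m_0$, $m_1$) suffice. Your conclusion is also strictly stronger: you obtain $p\equiv 0$, i.e.\ $f$ odd, from which the vanishing of all moments is immediate since $f(t)G(t)^k$ is then odd on $[-1,1]$; the paper concludes only the moment vanishing (for this $g$ the two statements are in fact equivalent, but the paper never makes that explicit, and the oddness form is the one directly relevant to the Lijun--Yun conjecture of Proposition \ref{A2}). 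What the paper's route buys is brevity given the cited machinery, and a template that scales to more sign changes when more moments are available; what yours buys is independence from \cite{BFY2}, weaker input ($m_2$ is never used), and a composition-flavored output. One caution: your strengthened conclusion does not by itself subsume Corollary \ref{T4}, whose hypothesis (degree $d\le 5$, no sign-change restriction) allows $p(t^2)$ to change sign up to four times, which is why the paper proves that corollary by a separate linear-algebra argument.
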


If $g(t)=t^{n-1}$ is as in Theorem \ref{T1.2}, we have the following result. 
\begin{cor}\label{T4}
Let $f(\cdot)$ be a polynomial of degree $d\leq 5$. Then, the Abel equation
\begin{equation}\label{E3.5}
	x'=t^{n-1}x^2 + f(t)x^3, \,\,\, t \in [-1,1],
\end{equation}
has a center $x=0$ if and only if $f(t)$ is an odd polynomial. 
\end{cor}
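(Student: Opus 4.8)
The plan is to prove the two implications separately, using the Alwash--Lloyd criterion for sufficiency and Theorem~\ref{T1} for necessity; in the necessity direction I would reduce the whole problem to showing that a single homogeneous linear system has only the trivial solution. For the \emph{if} direction, suppose $f$ is odd, so that $f(t)=t\,q(t^2)$ for some polynomial $q$. Since $n$ is even, $g(t)=t^{n-1}=t\,(t^2)^{(n-2)/2}$ is odd as well. I would then apply Proposition~\ref{lloyd} with the closed map $\sigma(t)=t^2$, for which $\sigma'(t)=2t$ and $\sigma(-1)=\sigma(1)=1$: setting $\hat f(s)=q(s)/2$ and $\hat g(s)=s^{(n-2)/2}/2$ gives $f(t)=\hat f(\sigma(t))\sigma'(t)$ and $g(t)=\hat g(\sigma(t))\sigma'(t)$, so by \eqref{Composition} the equation \eqref{E3.5} has a center.

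For the \emph{only if} direction, assume \eqref{E3.5} has a center. Since $f$ and $g$ are polynomials (hence analytic), Theorem~\ref{T1} yields $m_0=m_1=m_2=0$, where $G(t)=\tfrac1n(t^n-1)$. The first key observation is that, because $n$ is even, $G$ is an \emph{even} function of $t$, and hence so is each power $(G(t))^k$. Writing $f(t)=p(t^2)+t\,q(t^2)$, the odd part $t\,q(t^2)$ multiplied by the even function $(G(t))^k$ is odd and therefore integrates to zero over the symmetric interval $[-1,1]$. Thus $m_k=\int_{-1}^1 p(t^2)(G(t))^k\,dt$ depends only on the even part of $f$. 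For $d\le 5$ one has $p(s)=a_0+a_2 s+a_4 s^2$, so the three vanishing conditions $m_0=m_1=m_2=0$ form a homogeneous linear system in the three unknowns $(a_0,a_2,a_4)$.

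The second step is to show this system is nonsingular. Since $\{1,G,G^2\}$ and $\{1,t^n,t^{2n}\}$ span the same space, the three conditions are equivalent to $\int_{-1}^1 p(t^2)\,t^{ni}\,dt=0$ for $i=0,1,2$; the substitution $s=t^2$ turns these into $\int_0^1 p(s)\,s^{(ni-1)/2}\,ds=0$, i.e. $\tfrac{a_0}{ni+1}+\tfrac{a_2}{ni+3}+\tfrac{a_4}{ni+5}=0$ for $i=0,1,2$. The coefficient matrix has entries $\frac{1}{x_i+y_j}$ with $x_i=ni+1\in\{1,\,n+1,\,2n+1\}$ and $y_j=2j\in\{0,2,4\}$, which is a Cauchy matrix. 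Its nodes $x_i$ and $y_j$ are pairwise distinct for every even $n\ge 2$, so its determinant is nonzero. Hence $a_0=a_2=a_4=0$, the even part of $f$ vanishes, and $f$ is odd.

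I expect the genuine work to lie in the nonsingularity verification of the final step: recognizing the Cauchy structure is what makes it clean, but one must confirm that the nodes remain distinct for all admissible $n$, and that only the three moments $m_0,m_1,m_2$ guaranteed by Theorem~\ref{T1} are required. This last point is what makes the degree bound $d\le 5$ natural, since the even part then carries exactly three coefficients; it also explains why one should route the argument through Theorem~\ref{T1} rather than Theorem~\ref{T1.2}, whose sign-change hypothesis on $p(t^2)$ may fail for a general quintic.
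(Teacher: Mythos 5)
Your proof is correct, and its core is the same as the paper's: invoke Theorem \ref{T1} to get $m_0=m_1=m_2=0$, use parity (with $n$ even, $G$ is even) to kill the odd part of $f$, and reduce to a homogeneous $3\times 3$ linear system in the three even coefficients $(a_0,a_2,a_4)$, whose nonsingularity forces $f$ to be odd. Where you genuinely differ is in how nonsingularity is established. The paper expands $(t^n-1)^k$ and integrates directly, arriving at the matrix in \eqref{S2} whose entries have the nested product form $\frac{1}{(2j+1)}$, $\frac{1}{(2j+1)(2j+1+n)}$, $\frac{1}{(2j+1)(2j+1+n)(2j+1+2n)}$, and then simply asserts the value of its determinant, $-\frac{16}{15(n+1)(n+3)(n+5)(1+2n)(3+2n)(5+2n)}$ --- a brute-force evaluation the reader must verify. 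You instead replace $\{1,G,G^2\}$ by $\{1,t^n,t^{2n}\}$ (legitimate, since the change of basis is triangular with nonzero diagonal) and substitute $s=t^2$, which exhibits the coefficient matrix as a Cauchy matrix with nodes $x_i=ni+1$, $y_j=2j$; nonsingularity then follows from distinctness of the nodes with no computation at all, and the argument is visibly uniform in every even $n$. Two further points in your favor: you prove the sufficiency direction explicitly via Proposition \ref{lloyd} with $\sigma(t)=t^2$, whereas the paper's proof section treats only necessity (leaving the converse implicit in its earlier discussion of the composition condition); and your closing remark is accurate --- the argument must route through Theorem \ref{T1} rather than Theorem \ref{T1.2}, because the hypothesis that $p(t^2)$ changes sign at most twice need not hold for a general quintic.
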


\begin{rem}\label{AT1}
The results of Theorem \ref{T1.2} and Corollary \ref{T4} hold if $g(t)=nt^{n-1}$, where $n$ is a positive even integer. Hence, $G(t)=\int_{-1}^tg(s)ds=t^n-1$.
\end{rem}
According to Corollary \ref{T4} and Remark \ref{AT1}, we give a positive answer to the conjecture proposed by Lijun and Yun \cite[Remark 5.6]{LY} when $f(\cdot)$ is a polynomial of degree $d\leq 5$ and $n=2$.

The following diagram gives us some relations and implications about the \textit{Center problem},  \textit{Moment conditions} and the \textit{Composition Conjecture}.

\vspace{3cm}

\xymatrix{
& *+++++[o][F-]{Composition~ Condition}  \ar@/_1.2cm/[dl]^{(1)}
 \ar@/^1.2cm/[dr]_{(2)} &\\
 *+++++[o][F-]{ Moment~ Condition}  \ar@/_2.2cm/[rr]^{(5)}  \ar@/^2.0cm/[ur]^{(3)}& & *+++++[o][F-] \txt {$y'=f(t)y^3+g(t)y^2$ \\ has a center at origin}  \ar@/_2.0cm/[ul]_{(4)}  \ar@/^1.5cm/[ll]_{(6)}
 }

\vspace{1cm}
\noindent
Implication $(1)$ was proved in \cite[p. 13]{BBY}.\\
Implication $(2)$ was proved in \cite[p. 442]{BRY}.\\ 
Implication $(3)$ is not generally true. For example, see \cite{Pa} and \cite{GGS}.\\ 
Implication $(4)$ is generally an open problem, namely \textit{Composition Conjecture}. Several particular cases were proved. See, for example, \cite{BFY1} and the reference therein. We proved some particular cases, see Corollaries  \ref{T4}. \\
Implication $(5)$ is generally an open problem. \\
Implication $(6)$ holds in several particular cases. One of these cases is the main result of this paper, see Theorem \ref{T1.2}.

\begin{rem}
Notice that, if $(5)$ is true, then the composition conjecture (see $(4)$) is not true.
\end{rem}


\section{Preliminaries results}
Following Yang Lijun and Tang Yun \cite{LY}, we write the expression below, for a solution $x(t,\rho)$ of the Abel equation \eqref{A1} satisfying $x(-1,\rho)=\rho$.
\begin{equation}\label{0.0}
	x(t,\rho)=\rho +\sum_{k=2}^{\infty}r_k(t)\rho^k.
\end{equation}

To prove Theorems \ref{T1} and \ref{T1.2}, we apply the following result of Yang Lijun and Tang Yun \cite[Lemma 5.2, p 108]{LY}.

\begin{lem}\label{LY}
The origin $x=0$ is a center of the Abel equation \eqref{A1} if and only if 
\begin{equation}\label{condit0}
\int_{-1}^1g(t)dt=0 \mbox{ and } \int_{-1}^1f(t)r_k(t)dt=0, \,\, k\geq 0
\end{equation}
or, equivalently, if and only if
\begin{equation}\label{condit1}
\int_{-1}^1g(t)dt=0 \mbox{ and } \int_{-1}^1f(t)x(t,\rho)dt=0, \,\, |\rho|<\rho_0
\end{equation}
for $\rho_0$ small enough, where 
\[
x(t,\rho)=\frac{\rho}{\displaystyle 1-\rho\int_{-1}^t(f(s)x(s)+g(s))ds}.
\]
\end{lem}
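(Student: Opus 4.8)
The plan is to convert the Abel equation \eqref{A1} into an equivalent integral (implicit) relation and then read the center condition off the endpoint. First I would divide \eqref{A1} by $x^2$, giving $x'/x^2 = f(t)x + g(t)$, and observe that the left-hand side equals $-\frac{d}{dt}\bigl(1/x\bigr)$. Integrating from $-1$ to $t$ and using $x(-1,\rho)=\rho$ yields
\[
\frac{1}{\rho}-\frac{1}{x(t,\rho)}=\int_{-1}^t\bigl(f(s)x(s,\rho)+g(s)\bigr)\,ds,
\]
which rearranges to the closed form quoted in the statement,
\[
x(t,\rho)=\frac{\rho}{\,1-\rho\int_{-1}^t\bigl(f(s)x(s,\rho)+g(s)\bigr)\,ds\,}.
\]
This step is the backbone: it turns the differential center condition into a single scalar identity evaluated at the endpoint $t=1$.

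Next I would characterize the center. Since $x(1,\rho)=\rho$ holds precisely when the denominator above equals $1$ at $t=1$, and since $\rho\neq0$ for the solutions of interest, the center condition $x(1,\rho)=\rho$ is equivalent to
\[
\Phi(\rho):=\int_{-1}^1\bigl(f(s)x(s,\rho)+g(s)\bigr)\,ds=0 .
\]
Requiring this for every $|\rho|<\rho_0$ is exactly \eqref{condit1} once one separates the $\rho$-independent term: letting $\rho\to0$ gives $\Phi(0)=\int_{-1}^1 g=0$, and thereafter $\int_{-1}^1 f(s)x(s,\rho)\,ds\equiv0$ for all small $\rho$.

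To obtain \eqref{condit0} I would pass to the power series \eqref{0.0}. For analytic $f,g$, standard existence--uniqueness together with analytic dependence on the initial datum ensures that, on all of $[-1,1]$, the solution is a series $x(t,\rho)=\rho+\sum_{k\ge2}r_k(t)\rho^k$ (with $r_1\equiv1$) converging uniformly for $|\rho|<\rho_0$. Substituting into $\Phi$ and integrating term by term gives
\[
\Phi(\rho)=\int_{-1}^1 g(s)\,ds+\sum_{k\ge1}\rho^k\int_{-1}^1 f(s)r_k(s)\,ds .
\]
Because $\Phi$ is analytic in $\rho$, the identity $\Phi\equiv0$ on $|\rho|<\rho_0$ holds if and only if every Taylor coefficient vanishes, that is, $\int_{-1}^1 g=0$ and $\int_{-1}^1 f\,r_k=0$ for all $k$. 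This is \eqref{condit0}, and the equivalence of \eqref{condit0} with \eqref{condit1} is exactly this ``identically zero if and only if all coefficients zero'' statement. Both directions (center $\Rightarrow$ conditions and conditions $\Rightarrow$ center) are handled at once, so the biconditional needs no separate argument.

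The hard part will be the analytic bookkeeping that legitimizes these formal manipulations over the whole interval: showing that for $|\rho|<\rho_0$ the denominator $1-\rho\int_{-1}^t(fx+g)$ stays bounded away from $0$ (so the representation is valid and $x(\cdot,\rho)$ does not blow up before $t=1$), and that the series for $x(t,\rho)$ converges uniformly in $t$, which is what justifies term-by-term integration and the application of the identity theorem to $\Phi$. These are the quantitative continuation and continuous-dependence estimates; once they are in place, the remaining content is the elementary algebra above.
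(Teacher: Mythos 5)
Your proposal is correct in structure, but the comparison here is unusual: the paper never proves this lemma at all --- it is imported verbatim from Lijun and Yun \cite[Lemma 5.2]{LY} --- so any proof you give is necessarily a ``different route.'' What you construct is essentially the self-contained argument underlying \cite{LY}: integrating $-\frac{d}{dt}(1/x)=f(t)x+g(t)$ yields the closed form; for $\rho\neq 0$ the return condition $x(1,\rho)=\rho$ is equivalent to the vanishing of $\Phi(\rho)=\int_{-1}^1\bigl(f(s)x(s,\rho)+g(s)\bigr)\,ds$, and continuity at $\rho=0$ splits off $\int_{-1}^1 g\,dt=0$, giving \eqref{condit1}; then expanding $x(t,\rho)$ as in \eqref{0.0} and integrating term by term identifies the Taylor coefficients of the analytic function $\Phi$, so $\Phi\equiv 0$ on a neighborhood of $0$ if and only if \eqref{condit0} holds. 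It is worth noting that these are exactly the tools the paper assembles \emph{after} stating the lemma (the integral equation \eqref{eq5}, the geometric-series expansion of $x(t,\rho)$ in $\rho$, analyticity of the solution via \cite{CL2}) in order to derive Theorem \ref{T1}; you are running the same machinery one level earlier, to establish the lemma itself, which makes the overall development self-contained rather than resting on a citation. Two deferred points should be made explicit to fully close your argument: (i) dividing by $x^2$ requires $x(t,\rho)\neq 0$ on all of $[-1,1]$, which follows from uniqueness (the zero function is a solution, so solutions with $\rho\neq 0$ never vanish) together with a continuation estimate guaranteeing existence up to $t=1$ for $|\rho|<\rho_0$; (ii) the term-by-term integration and the identity-theorem step require the series \eqref{0.0} to converge uniformly in $t\in[-1,1]$ for $|\rho|<\rho_0$, i.e. a radius of analyticity in $\rho$ uniform in $t$. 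Both are precisely what the contraction-mapping construction quoted by the paper from \cite{LY} provides (the fixed point of $T_\rho$ with $\|x\|\leq 1$ for $\rho\leq\rho_0$), so flagging them as standard bookkeeping is legitimate, but they are the only genuinely analytic content of the lemma and should not be waved away entirely.
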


Now, suppose that $x=0$ is a center of the equation \eqref{A1}. Notice that a solution of \eqref{A1} is equivalent to a solution of the integral equation
\begin{eqnarray}\label{eq5}
x(t,\rho)=\frac{\rho}{\displaystyle 1-\rho\int_{-1}^t(f(s)x(s,\rho)+g(s))ds}, t \in [-1,1]
\end{eqnarray}
where $x(-1)=\rho$, for $\rho$ small enough such that $\rho\int_{-1}^t(f(s)x(s,\rho)+g(s))ds <1$ for all $t\in[-1,1]$. With the integral equation \eqref{eq5} of Abel equation we define the following nonlinear operator
\begin{eqnarray}
T_{\rho}&:&C[-1,1]\rightarrow C[-1,1]\\
T_{\rho}(x)(t)&=&\frac{\rho}{1-\rho \int_{-1}^{t}(f(s)x(s,\rho)+g(s))ds}
\end{eqnarray}
where $f,g\in C[-1,1]$ and $\rho\in \mathbb{R}$. Of course, $T_{\rho}$ is well defined on arbitrary bounded set of $C[-1,1]$ if $\rho$ is small enough. In  \cite{LY}, Lijun and Yun proved that $T_{\rho}$ is a contraction. According to the well known Banach contraction theorem, $T_{\rho}$ has a unique fixed point in $B_1=\{f\in C[-1,1];||f||\leq 1\}$, where $||f||=\sup_{t\in[-1,1]}|f(t)|$. In addition, Lijun and Yun proved that, for $\rho\leq \rho_0=(\sqrt{g}+||g||+||f||+1)^{-1}$, this fixed point is the solution $x(t,\rho)$ of the Abel equation \eqref{A1}, with $x(-1,\rho)=\rho$ and $||x||\leq 1$. 

According to the Abel equation \eqref{A1}, $f(t,x)=f(t)x^3+g(t)x^2,$ with $t \in [-1,1]$ and $x \in \R$, is analytic. Thus, the solution $x(t,\rho)$ is also analytic in $(t,\rho)$ (see in \cite[Th 8.2, p. 35]{CL2}).

Set 
\begin{equation}\label{H}
	H(t,\rho)=\int_{-1}^t(f(s)x(s,\rho)+g(s))ds.
\end{equation}
Since $f$, $g$ and $x$ are analytic functions, we have that $H$ is analytic. Furthermore, as $||x(t,\rho)||\leq 1$ and $f$ and $g$ are limited on the interval $[-1,1]$, for small enough $\rho$, we have $-1<\rho\int_{-1}^t(f(s)x(s,\rho)+g(s))ds <1$. Thus, the following identities are well defined
\begin{eqnarray*}
x(t,\rho)&=&\frac{\rho}{\displaystyle 1-\rho\int_{-1}^t(f(s)x(s,\rho)+g(s))ds}\\
&=&\rho\frac{1}{\displaystyle 1-\rho H(t,\rho)}\\
&=&\rho\left(1+H(t,\rho)\rho + H^2(t,\rho)\rho^2 + H^3(t,\rho)\rho^3 + \dots \right)\\
&=&\rho+H(t,\rho)\rho^2 + H^2(t,\rho)\rho^3 + H^3(t,\rho)\rho^4 + \dots \\
&=& \displaystyle \rho\sum_{k=0}^{\infty}H^k(t,\rho)\rho^{k}.
\end{eqnarray*}

By Lemma \ref{LY}, we obtain  
\[
\int_{-1}^1f(t)x(t,\rho)dt=0, \,\, |\rho|<\rho_0
\]
The last identities lead us to conclude that
\[
\int_{-1}^1f(t)x(t,\rho)dt=\rho\sum_{k=0}^{\infty}\int_{-1}^1f(t)H^k(t,\rho)dt\rho^{k}=0, \,\, |\rho|<\rho_0.
\]

In the proof of Theorem \ref{T1}, we need to show that $\int\limits_{-1}^1f(t)H^k(t,\rho)dt=0$ in $\rho=0$ and $k=0,1,2$. For such, we need some lemmas, which are presented below.

%
%
%

\begin{lem}\label{A6}
\[
\displaystyle \frac{\partial^{j}}{\partial \rho^{j}}\left(H(1,\rho)\right){\big|_{\rho =0}}=j!\int\limits_{-1}^1f(t)G^{j-1}(t)dt \mbox{ for each } j=1,2
\]
and
\[
\frac{\partial^{3}}{\partial \rho^{3}}\left(H(1,\rho)\right){\big|_{\rho =0}}=3!\int\limits_{-1}^1f(t)G^{2}(t)dt + 3!\int\limits_{-1}^1f(t)F(t)dt,
\]
where $G(t)=\int_{-1}^tg(s)ds$ and $F(t)=\int_{-1}^tf(s)ds$.
\end{lem}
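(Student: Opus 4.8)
The plan is to differentiate $H(1,\rho)$ under the integral sign and reduce everything to the Taylor coefficients of the solution $x(t,\rho)$ in the variable $\rho$. Since $x$ is analytic in $(t,\rho)$ and $G(1)=\int_{-1}^1 g(s)\,ds$ is a $\rho$-independent constant, one has, for $j\ge 1$,
\[
\frac{\partial^{j}}{\partial\rho^{j}}H(1,\rho)=\int_{-1}^{1}f(s)\,\frac{\partial^{j}x}{\partial\rho^{j}}(s,\rho)\,ds .
\]
Writing the Taylor expansion $x(s,\rho)=\sum_{k\ge 1}r_k(s)\rho^k$ (so that $r_1\equiv 1$ by \eqref{0.0}), evaluation at $\rho=0$ gives $\partial_\rho^{j}x(s,0)=j!\,r_j(s)$, whence
\[
\frac{\partial^{j}}{\partial\rho^{j}}H(1,\rho)\Big|_{\rho=0}=j!\int_{-1}^{1}f(s)\,r_j(s)\,ds .
\]
Thus the whole statement follows once $r_1,r_2,r_3$ are identified, and the task reduces to showing $r_2=G$ and $r_3=F+G^2$.

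To compute these coefficients I would use the two relations already derived in the text, namely
\[
x(s,\rho)=\rho+H(s,\rho)\rho^{2}+H^{2}(s,\rho)\rho^{3}+\cdots,\qquad
H(s,\rho)=\int_{-1}^{s}f(u)x(u,\rho)\,du+G(s),
\]
and I would expand $H$ itself as $H(s,\rho)=\sum_{m\ge 0}h_m(s)\rho^m$. From the second relation one reads off $h_0=G$ and $h_m(s)=\int_{-1}^{s}f(u)r_m(u)\,du$ for $m\ge 1$; in particular $h_1=\int_{-1}^{s}f(u)\,du=F$, since $r_1\equiv 1$. Substituting these into the first relation and collecting powers of $\rho$ then yields $r_2=h_0=G$ at order $\rho^2$, and $r_3=h_1+h_0^2=F+G^2$ at order $\rho^3$.

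Finally I would substitute back into the reduced formula: for $j=1,2$ this gives $\partial_\rho^{j}H(1,\rho)|_{0}=j!\int_{-1}^{1}f(t)G^{j-1}(t)\,dt$, while for $j=3$ it gives $3!\int_{-1}^{1}f(t)\bigl(F(t)+G^{2}(t)\bigr)\,dt$, which is exactly the claimed decomposition. The one point requiring care — and the step I would treat as the main obstacle — is the order-$\rho^3$ computation of $r_3$: because $H$ depends on $\rho$ implicitly through $x$, one cannot treat $H$ as constant in the expansion, and must correctly account for its first-order term $h_1=F$ in addition to the $h_0^2=G^2$ contribution coming from the $H^2\rho^3$ term. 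Tracking this cross term is precisely what produces the extra integral $\int_{-1}^1 f(t)F(t)\,dt$ that appears for $j=3$ but is absent for $j=1,2$.
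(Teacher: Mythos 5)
Your proof is correct, but it is organized differently from the paper's. The paper proves this lemma by differentiating the expansion $x(t,\rho)=\rho+H(t,\rho)\rho^2+H^2(t,\rho)\rho^3+\cdots$ term by term up to third order, tracking remainder terms $R_1,R_2$ that vanish at $\rho=0$, and then feeding the resulting expressions for $\partial_\rho^j x$ back into $\partial_\rho^j H(t,\rho)=\int_{-1}^t f(s)\,\partial_\rho^j x(s,\rho)\,ds$ recursively; the coefficients $r_j$ never appear, and the link $\partial_\rho^j H(1,\rho)|_{\rho=0}=j!\int_{-1}^1 f\,r_j\,dt$ is established separately as Lemma \ref{A7}. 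You instead take that link as your first step (re-proving Lemma \ref{A7} via differentiation under the integral sign) and then identify $r_2=G$ and $r_3=F+G^2$ by matching Taylor coefficients of both functional relations, writing $H=\sum_m h_m\rho^m$ with $h_0=G$, $h_1=F$. The two computations are the same modulo factorials ($r_j=\partial_\rho^j x|_0/j!$, $h_m=\partial_\rho^m H|_0/m!$), and the decisive cross term is identical: your $h_1=F$ contribution at order $\rho^3$ is exactly the paper's $3!\,\partial_\rho H$ term in its equation \eqref{3}. What your organization buys is twofold: it avoids the remainder bookkeeping ($R_1$, $R_2$) entirely, and it produces the explicit formulas $r_2=G$, $r_3=G^2+F$, which the paper never states but which make the subsequent proof of Theorem \ref{T1} transparent --- the center condition $\int_{-1}^1 f\,r_j\,dt=0$ of Lemma \ref{LY} then reads off the moment conditions directly, with the single integration by parts $\int_{-1}^1 fF\,dt=\frac12[F^2(1)-F^2(-1)]=0$ disposing of the extra term. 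The paper's version, by contrast, keeps Lemmas \ref{A6} and \ref{A7} as two independent evaluations of the same derivative, and Theorem \ref{T1} follows by equating them.
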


\begin{proof}

Since
\begin{eqnarray}\label{0}
x(t,\rho)&=&\rho+H(t,\rho)\rho^2 + H^2(t,\rho)\rho^3 + H^3(t,\rho)\rho^4 + \dots 
\end{eqnarray}
we obtain
\begin{equation}\label{1}
	\frac{\partial}{\partial \rho}x(t,\rho)=1+\frac{\partial}{\partial \rho}(H(t,\rho)\rho^2) + \frac{\partial}{\partial \rho}(H^2(t,\rho)\rho^3) + \dots, 
\end{equation}
\begin{equation}\label{2}
\begin{array}{rcl}
	\frac{\partial^2}{\partial \rho^2}x(t,\rho)&=&\frac{\partial^2}{\partial \rho^2}(H(t,\rho)\rho^2) + \frac{\partial^2}{\partial \rho^2}(H^2(t,\rho)\rho^3) + \dots\\
	&=& 2H(t,\rho) + R_1(t,\rho),
	\end{array}
\end{equation}
where $R_1(t,\rho)|_{\rho=0}=0$ and
\begin{equation}\label{3}
\begin{array}{rcl}
	\frac{\partial^3}{\partial \rho^3}x(t,\rho)&=&\frac{\partial^3}{\partial \rho^3}(H(t,\rho)\rho^2) + \frac{\partial^3}{\partial \rho^3}(H^2(t,\rho)\rho^3) + \dots\\
	&=& 3!H^2(t,\rho) + 3!\frac{\partial}{\partial \rho}H(t,\rho) + R_2(t,\rho),
	\end{array}
\end{equation}
where $R_2(t,\rho)|_{\rho=0}=0$. 

By definition of $H$ and \eqref{1}, we obtain
\[
\frac{\partial}{\partial \rho}H(t,\rho)=\int\limits_{-1}^tf(s)\frac{\partial}{\partial \rho}x(s,\rho)ds
\]
and
\begin{equation}\label{4}
	\frac{\partial}{\partial \rho}H(t,\rho)|_{\rho=0}=\int\limits_{-1}^tf(s)ds=F(t). 
\end{equation}
Hence,
\begin{equation}\label{4.2}
	\frac{\partial}{\partial \rho}H(1,\rho)|_{\rho=0}=\int\limits_{-1}^1f(t)dt. 
\end{equation}

By definition of $H$ and \eqref{2}, we obtain
\[
\frac{\partial^2}{\partial \rho^2}H(t,\rho)=2\int\limits_{-1}^tf(s)H(s,\rho)ds +\int\limits_{-1}^tR_1(s,\rho)ds 
\]
and
\begin{equation}\label{5}
	\frac{\partial^2}{\partial \rho^2}H(t,\rho)|_{\rho=0}=2\int\limits_{-1}^tf(s)H(s,0)ds=2\int\limits_{-1}^tf(s)G(s)ds. 
\end{equation}
Hence,
\begin{equation}\label{5.2}
	\frac{\partial^2}{\partial \rho^2}H(1,\rho)|_{\rho=0}=2\int\limits_{-1}^1f(t)G(t)dt. 
\end{equation}

By definition of $H$, \eqref{3} and \eqref{4}, we obtain
\[
\frac{\partial^3}{\partial \rho^3}H(t,\rho)=3!\int\limits_{-1}^tf(s)H(s,\rho)ds + 3!\int\limits_{-1}^tf(s)\frac{\partial}{\partial \rho}H(s,\rho)ds +\int\limits_{-1}^tR_2(s,\rho)ds 
\]
and
\begin{equation}\label{6}
\begin{array}{rcl}
	\frac{\partial^3}{\partial \rho^3}H(t,\rho)|_{\rho=0}&=&3!\int\limits_{-1}^tf(s)H^2(s,0)ds + 3!\int\limits_{-1}^tf(s)\frac{\partial}{\partial \rho}H(s,0)ds\\
	&=& 3!\int\limits_{-1}^tf(s)G^2(s)ds + 3!\int\limits_{-1}^tf(s)F(s)ds.
	\end{array}
\end{equation}
Hence,
\[
\frac{\partial^3}{\partial \rho^3}H(1,\rho)|_{\rho=0}= 3!\int\limits_{-1}^1f(t)G^2(t)dt + 3!\int\limits_{-1}^1f(t)F(t)dt.
\]

\end{proof}

\begin{lem} \label{A7}
Let $H$ be defined by \eqref{H}, then
\[
\displaystyle \frac{\partial^{j}}{\partial \rho^{j}}\left(H(1,\rho)\right){\big|_{\rho =0}}=j!\int\limits_{-1}^1f(t)r_j(t)dt \mbox{ for each } j=1,2,3.
\]
\end{lem}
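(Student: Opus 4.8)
The plan is to read the $\rho$-derivatives of $x$ at $\rho=0$ directly off the power-series expansion \eqref{0.0} and then to differentiate $H$ under the integral sign. First I would observe that the analytic dependence of $x(t,\rho)$ on $\rho$, already established above, makes \eqref{0.0} a genuine Taylor series about $\rho=0$; writing $r_1(t)\equiv 1$ for the coefficient of $\rho$, this gives at once
\[
\frac{\partial^{j}}{\partial\rho^{j}}x(t,\rho)\Big|_{\rho=0}=j!\,r_j(t),\qquad j\geq 1.
\]

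Next I would differentiate the defining relation $H(t,\rho)=\int_{-1}^t(f(s)x(s,\rho)+g(s))\,ds$ with respect to $\rho$ under the integral sign. Since $g(s)$ carries no $\rho$-dependence, it disappears after a single differentiation, so that for each $j\geq 1$
\[
\frac{\partial^{j}}{\partial\rho^{j}}H(t,\rho)=\int\limits_{-1}^t f(s)\,\frac{\partial^{j}}{\partial\rho^{j}}x(s,\rho)\,ds.
\]
Evaluating at $\rho=0$ and inserting the first step yields $\frac{\partial^{j}}{\partial\rho^{j}}H(t,\rho)\big|_{\rho=0}=j!\int_{-1}^t f(s)r_j(s)\,ds$, and specializing to $t=1$ produces the claimed identity for $j=1,2,3$.

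The only point requiring care is the legitimacy of differentiating under the integral sign (equivalently, of differentiating the series for $x$ term by term), and this is where I expect the main, though routine, effort to go. I would justify it by appealing to the analyticity of $x(t,\rho)$ in $(t,\rho)$ together with the uniform bound $\|x(\cdot,\rho)\|\leq 1$ valid for $|\rho|$ small, which yields local uniform convergence of the series and of its $\rho$-derivatives on $[-1,1]$ and hence permits interchanging $\frac{\partial^{j}}{\partial\rho^{j}}$ with $\int_{-1}^t$. Once this interchange is secured the identity is purely formal, and I note that comparing it with Lemma \ref{A6} simultaneously identifies $r_1\equiv 1$, $\int_{-1}^1 f\,r_2\,dt=\int_{-1}^1 fG\,dt$, and $\int_{-1}^1 f\,r_3\,dt=\int_{-1}^1 f(G^2+F)\,dt$, which is exactly what is needed to feed into the center condition of Lemma \ref{LY} in the proof of Theorem \ref{T1}.
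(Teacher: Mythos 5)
Your proof is correct and takes essentially the same approach as the paper: both arguments read the coefficients $r_j$ (with $r_1\equiv 1$) off the expansion $x(t,\rho)=\rho+\sum_{k\geq 2}r_k(t)\rho^k$ and then exchange the $\rho$-operations with the integration in $t$, relying on analyticity for the interchange. The only cosmetic difference is the order of operations: the paper substitutes the series into $H$ and integrates term by term before differentiating in $\rho$, while you differentiate under the integral sign first, but this is the same computation.
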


\begin{proof}

If we replace the expression \eqref{0.0} by the formula defining $H(t,\rho)$, we obtain
\[
\begin{array}{rcl}
	H(t,\rho)&=&\displaystyle \int_{-1}^t[f(s)x(s,\rho)+g(s)]ds\\
	&=&\displaystyle \left(\int_{-1}^tf(s)ds\right)\rho + \sum_{k=2}^{\infty}\left(\int\limits_{-1}^tf(s)r_k(s)ds\right)\rho^k +\int_{-1}^tg(s)ds\\
	&=&\displaystyle G(t)+ F(t)\rho + \sum_{k=2}^{\infty}\left(\int\limits_{-1}^tf(s)r_k(s)ds\right)\rho^k.
	\end{array}
\]
Hence,
\[
\displaystyle \frac{\partial^{j}}{\partial \rho^{j}}\left(H(t,\rho)\right){\big|_{\rho =0}}=j!\int\limits_{-1}^tf(s)r_j(s)ds \mbox{ for each } j=1,2,3.
\]
Therefore,
\[
\displaystyle \frac{\partial^{j}}{\partial \rho^{j}}\left(H(1,\rho)\right){\big|_{\rho =0}}=j!\int\limits_{-1}^1f(t)r_j(t)dt \mbox{ for each } j=1,2,3.
\]
\end{proof}

\section{Proof of Theorem \ref{T1}}

By Lemma \ref{LY}, we have  
\[
\int_{-1}^1f(t)r_k(t)dt=0, \,\, k\geq 0.
\]
By Lemmas \ref{A6} and \ref{A7}, we obtain
\[
\displaystyle \frac{\partial^{j}}{\partial \rho^{j}}\left(H(1,\rho)\right){\big|_{\rho =0}}=j!\int\limits_{-1}^1f(t)G^{j-1}(t)dt=j!\int_{-1}^1f(t)r_j(t)dt=0, \mbox{ for each } j=1,2
\]
and
\[
\frac{\partial^{3}}{\partial \rho^{3}}\left(H(1,\rho)\right){\big|_{\rho =0}}=3!\int\limits_{-1}^1f(t)G^{2}(t)dt + 3!\int\limits_{-1}^1f(t)F(t)dt=3!\int_{-1}^1f(t)r_3(t)dt=0.
\]
Since by Lemma \ref{LY} we obtain $F(1)=\int_{-1}^1f(t)dt=0$, we conclude that
\[
\int\limits_{-1}^1f(t)F(t)dt=\int\limits_{-1}^1F'(t)F(t)dt=\frac{1}{2}\int\limits_{-1}^1\frac{d}{dt}F^2(t)dt=\frac{1}{2}[F^2(1)-F^2(-1)]=0.
\]
Therefore,
\[
\int\limits_{-1}^1f(t)G^{2}(t)dt=\int_{-1}^1f(t)r_3(t)dt=0.
\]
Hence, we obtain
\[
m_k=\int_{-1}^1f(t)(G(t))^kdt=0, \,\, k=0,1,2.
\]

\section{Proof of Theorem \ref{T1.2}}

In the proof of Theorem \ref{T1}, we will use the result due to Briskin, Francoise and Yomdin \cite[Theorem 4.1]{BFY2}. By \cite{BFY2}, it is sufficient to analyze the sign changes of the function
\[
\psi(u)=\sum_{i=1}^msgn(g(t_i(u)))\frac{f(t_i(u))}{g(t_i(u))}
\] 
where $t_1(u), t_2(u),\dots, t_m(u)$ are the solutions in $[-1,1]$ of the equation $G(t)=u$ where $u \in [u_0,u_1]$, with $u_0=\min\limits_{[-1,1]}G(t)$ and $u_1=\max\limits_{[-1,1]}G(t)$.

Since $g(t)=t^{n-1}$, we obtain $G(t)=\frac{1}{n}(t^n-1)$. Hence, $u_0=-\frac{1}{n}$ and $u_1=0$. In particular, if $u \in [-\frac{1}{n},0]$, we obtain $0\leq 1+nu\leq 1$. The solutions in $[-1,1]$ of the equation
\[
G(t)=\frac{1}{n}(t^n-1)=u
\]
are $t_1(u)=(1+nu)^{\frac{1}{n}}$ and $t_2(u)=-(1+nu)^{\frac{1}{n}}$. Therefore, since $n$ is even,
\[
\begin{array}{rcl}
\psi(u)&=&sgn(g(t_1(u)))\frac{f(t_1(u))}{g(t_1(u))}+sgn(g(t_2(u)))\frac{f(t_2(u))}{g(t_2(u))}\\
&=& \frac{f(t_1(u))}{g(t_1(u))}-\frac{f(t_2(u))}{g(t_2(u))}\\
&=& \frac{1}{(1+nu)^{\frac{n-1}{n}}}[f((1+nu)^{\frac{1}{n}}) + f(-(1+nu)^{\frac{1}{n}})].
\end{array}
\]
Now, we define the polynomial $h(t)=f(t)+f(-t)=p(t^2)$. Since according assumptions, the polynomial $h(t)$ changes sign at most two times in $[-1,1]$, we conclude that $\psi(u)$ changes sign at most two times in $[-1,0]$.

Since the equation \eqref{abell} has a center at $x=0$, then by Theorem \ref{T1},
\[
m_k=\int\limits_{-1}^1f(t)(G(t))^kdt=0,~~k=0,1,2.
\]
Hence, $\psi(u)$ changes sign at most two times in $[-1,0]$ and $m_k=0$ for $k=0,1,2$. According to \cite[Theorem 4.1]{BFY2}, we obtain that $m_k=0$ for $k=1,2,\dots$, that is
\[
m_k=\int_{-1}^1f(t)(G(t))^kdt=0,~~k=0,1,2,\ldots,
\]
where $G(t)=\int_{-1}^tg(s)ds=\frac{1}{n}(t^n-1)$.

\section{Proof of Corollary \ref{T4}}
We can write the polynomial $f$ as
\[
f(t)=a_0+a_1t+a_2t^2+a_3t^3+a_4t^4+a_5t^5.
\]
By Theorem \ref{T1}  
\[
m_k=\int_{-1}^1f(t)(G(t))^kdt=0,~~k=0,1,2
\]
where $G(t)=\int_{-1}^tg(s)ds=\frac{1}{n}(t^n-1)$. Hence,
\[
\int_{-1}^1f(t)dt=0,
\]
\[
\frac{1}{n}\int_{-1}^1f(t)(t^n-1)dt=0
\]
and
\[
\frac{1}{n^2}\int_{-1}^1f(t)(t^n-1)^2dt=0.
\]
Since $t^n-1$ is an even function, we obtain
\[
\int_{-1}^1[a_0+a_2t^2+a_4t^4]dt=0,
\]
\[
\frac{1}{n}\int_{-1}^1[a_0+a_2t^2+a_4t^4](t^n-1)dt=0
\]
and
\[
\frac{1}{n^2}\int_{-1}^1[a_0+a_2t^2+a_4t^4](t^n-1)^2dt=0.
\]
By solving the integrals, we obtain the following system

\begin{equation}\label{S1.2}
\left\{\begin{array}{ccc}
2a_0 + \frac{2}{3}a_2 + \frac{2}{5}a_4&=&0\\
-\frac{2n}{1+n}a_0 - \frac{2n}{3(3+n)}a_2 - \frac{2n}{5(5+n)}a_4&=&0\\
\frac{4n^2}{(1+2n)(1+n)}a_0 + \frac{4n^2}{3(3+2n)(3+n)}a_2 + \frac{4n^2}{5(5+2n)(5+n)}a_4&=&0,
\end{array}
\right.
\end{equation}
or equivalently
\begin{equation}\label{S2}
\left[ \begin{array}{ccc} 1 & \frac{1}{3}  & \frac{1}{5} \\ 
\\
\frac{1}{1+n} &  \frac{1}{3(3+n)}  &  \frac{1}{5(5+n)}\\
\\
\frac{1}{(1+2n)(1+n)} & \frac{1}{3(3+2n)(3+n)}  & \frac{1}{5(5+2n)(5+n)}\\
\end{array} \right]
\left[ \begin{array}{c} a_0\\ 
a_2 \\
a_4 
\end{array} \right]=
\left[ \begin{array}{c} 0\\ 
0 \\
0
\end{array} \right].
\end{equation}

Notice that the matrix associate
\[
M=\left[ \begin{array}{ccc} 1 & \frac{1}{3}  & \frac{1}{5} \\ 
\\
\frac{1}{1+n} &  \frac{1}{3(3+n)}  &  \frac{1}{5(5+n)}\\
\\
\frac{1}{(1+2n)(1+n)} & \frac{1}{3(3+2n)(3+n)}  & \frac{1}{5(5+2n)(5+n)}\\
\end{array} \right]
\] 
is nonsingular, with $det(M)=-\frac{16}{15(n+1)(n+3)(n+5)(1+2n)(3+2n)(5+2n)}$. Then, the system \eqref{S2} has only the trivial solution, namely, $a_0=a_2=a_{4}=0$. Hence, $f(\cdot)$ has only odd powers of $t$ and this finishes the proof of Corollary \ref{T4}.


\end{document}